\newtheorem{theorem}{Theorem}
\begin{document}

\title{Three expressions of the $n$-th prime number: discrete sieving, spectral analysis and probabilistic dynamics}

\author[$\dagger$]{Jean-Christophe {\sc Pain}\footnote{jean-christophe.pain@cea.fr}\\
\small
$^1$CEA, DAM, DIF, F-91297 Arpajon, France\\
$^2$Universit\'e Paris-Saclay, CEA, Laboratoire Mati\`ere en Conditions Extr\^emes,\\ 
F-91680 Bruy\`eres-le-Ch\^atel, France
}

\date{}

\maketitle

\begin{abstract}
The search for a closed-form expression of the $n$-th prime number, $p_n$, has long oscillated between the rigid determinism of analytic functions and the apparent randomness of local distributions. This paper explores three different approaches to $p_n$. The first one formalizes an analytical identity for $p_{n}$ based on a harmonic summation filtered by a M\"obius-derived coprimality indicator. Unlike Gandhi's 1971 identity, which employs a geometric density and logarithmic extraction, this formula operates through a discrete summation over the range defined by Bertrand's postulate. In the second one, we refine the ``harmonic resonance'' model, which posits that primes emerge as spectral nodes from von Mangoldt oscillations. Third, we adopt a ``survival dynamics'' approach, inspired by Mertens' theorems, treating prime spacing as an evolutionary growth process. By bridging these perspectives, we offer a comprehensive framework for understanding the transition from asymptotic trends to discrete arithmetic realities.
\end{abstract}

\section{Introduction: the asymptotic baseline}\label{sec1}

The distribution of prime numbers is fundamentally anchored by the prime-number theorem, which dictates that $p_n \sim n \ln n$ \cite{Gauss1792,Hardy1938}. However, the smooth curve provided by the logarithmic integral $\text{Li}(x)$:
$$
\mathrm {Li} (x)=\mathrm {li} (x)-\mathrm {li} (2)=\int _{2}^{x}{\frac {\mathrm {d} t}{\ln(t)}},
$$
where $\mathrm{li}(x)$ is the Cauchy principal value
$$
\mathrm{li}(x)=\lim _{\varepsilon \to 0}\left(\int _{0}^{1-\varepsilon }{\frac {\mathrm {d} t}{\ln(t)}}+\int _{1+\varepsilon }^{x}{\frac {\mathrm {d} t}{\ln(t)}}\right),
$$
represents only a mean density distribution \cite{Rosser1962}. In reality, the sequence of primes $\{p_n\}$ exhibits ``chatter''-local fluctuations that reflect the complex underlying structure of the integers. To move beyond mere approximation, one must account for the discrete ``noise'' of the distribution. This noise is not stochastic, but rather the result of a spectral interference on one hand, and a cumulative probability of survival on the other. While Gandhi's formula \cite{Gandhi1971} uses the inclusion-exclusion principle and logarithms, it is often compared to other ``exact'' prime formulas in academic literature \cite{VandenEynden1972}. Willans's formula \cite{Willans1964} is the famous predecessor to Gandhi's work, using Wilson's theorem. Dudley offers a historical survey \cite{Dudley1969} that places Gandhi's work in the context of the long search for an analytical representation of $p_n$. Following Golomb's 1974 ``demystification'' \cite{Golomb1974a}, researchers looked for ways to make the formula more general or computationally interesting, Ernvall offers a variation of Gandhi's identity that attempts to reduce the complexity of the alternating sums \cite{Ernvall1975}. As noted in the appendix, Golomb showed that the formula represents the probability in binary form, which is essentially an analytical version of the sieve of Eratosthenes \cite{Meyre}. Golomb further proposed a direct algorithmic realization of the sieve itself, independent of Gandhi’s identity, showing that the Eratosthenes process can be fully encoded in a purely analytical framework \cite{Golomb1974b}. The quest for analytical representations remains a vibrant field of study, with recent contributions continuing to refine the structural properties of these identities \cite{Atanassov2021}. Ribenboim's treatise provides an extensive discussion on Gandhi's formula \cite{Ribenboim2011}, highlighting its theoretical significance despite a computational complexity constrained by $2^n$ terms in the summation. By explicitly mapping the analytical structure of the formula onto the classical sieve of Eratosthenes, the work of Regimbal \cite{Regimbal1975} remains a pivotal modern reference, as it refines the nexus between the continuous sieving process and the discrete identification of the next prime $p_{n+1}$. 

Building upon these foundations, this paper presents three distinct conceptual frameworks. The first model formalizes, in section \ref{sec2}, a discrete sieving identity via M\"obius-derived coprimality indicators, acting as a digital gating mechanism through iterative filtering. The second approach, presented in section \ref{sec3}, shifts toward spectral analytic number theory, interpreting prime distribution as a phenomenon of harmonic resonance resulting from wave interference. Finally, we propose, in section \ref{sec4}, a third model based on survival dynamics, where prime emergence is treated as a cumulative growth process driven by probabilistic evolutionary pressure. The first formula is a rigorous arithmetic identity. The second and third approaches are not exact formulas but phenomenological models that aim at capturing complementary structural aspects of the prime sequence.

\section{First formula: discrete sieving via M\"obius-derived coprimality indicators}\label{sec2}

In this section, we formalize an analytical representation of the $(n+1)$-th prime through a discrete sieving identity where the harmonic sum serves as a certificate of uniqueness.

\begin{theorem}
Let $(p_n)_{n\ge1}$ denote the sequence of prime numbers and $P_n = \prod_{i=1}^n p_i$ the $n$-th primorial. Let $\chi_n(m)$ be the arithmetical filter defined by:
\begin{equation*}
\chi_n(m) = \sum_{d|\gcd(m, P_n)} \mu(d),
\end{equation*}
where $\mu$ is the M\"obius function \cite{Mobius1832}. Then, the $(n+1)$-th prime is the unique integer $m > 1$ in the range $[1, 2p_n]$ such that $\chi_n(m)=1$, which can be expressed as:
\begin{equation*}
p_{n+1} = \min \{ m \in \mathbb{N} : m > 1 \text{ and } \chi_n(m) = 1 \}.
\end{equation*}
Furthermore, the following harmonic identity holds:
\begin{equation*}
\left\lfloor \sum_{m=1}^{2p_n} \frac{\chi_n(m)}{m} \right\rfloor = 1.
\end{equation*}
\end{theorem}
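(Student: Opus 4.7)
The core of the argument is the classical M\"obius identity $\sum_{d\mid k}\mu(d)=[k=1]$. Applied to $k=\gcd(m,P_n)$, this immediately gives $\chi_n(m)=[\gcd(m,P_n)=1]$, so that $\chi_n$ is simply the indicator that $m$ is coprime to each of $p_1,\ldots,p_n$. I would record this reduction first, since every subsequent step concerns only the support of this indicator.

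Next, I would pin down that support inside $[2,2p_n]$. Any $m\ge 2$ with $\chi_n(m)=1$ has smallest prime factor at least $p_{n+1}$; were $m$ composite, this would force $m\ge p_{n+1}^{2}$, but $p_{n+1}>p_n\ge 2$ gives $p_{n+1}^{2}>p_n^{2}\ge 2p_n$, contradicting $m\le 2p_n$. Hence the admissible integers in $[2,2p_n]$ are exactly the primes in $(p_n,2p_n]$. Bertrand's postulate guarantees that this set is non-empty, and by definition its minimum is $p_{n+1}$, yielding the identity $p_{n+1}=\min\{m>1:\chi_n(m)=1\}$.

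For the harmonic statement I would split off the $m=1$ contribution and write
\begin{equation*}
\sum_{m=1}^{2p_n}\frac{\chi_n(m)}{m}=1+\sum_{\substack{p_n<q\le 2p_n\\ q\text{ prime}}}\frac{1}{q}.
\end{equation*}
A routine integral comparison gives $\sum_{m=p_n+1}^{2p_n}1/m<\int_{p_n}^{2p_n}dt/t=\ln 2<1$, and restricting the sum to primes only shrinks it further. Therefore the full expression lies in $[1,\,1+\ln 2)\subset [1,2)$, so its floor equals exactly $1$.

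The main obstacle, in my view, is the second step: one must genuinely rule out composite integers in $[2,2p_n]$ coprime to $P_n$. This is where the quadratic gap $p_{n+1}^{2}>2p_n$ intervenes and explains why the Bertrand window $[1,2p_n]$ is the natural range in which the identity takes this clean form; any wider window would eventually admit $p_{n+1}^{2}$ (and further prime-power combinations), potentially breaking the bound $\ln 2<1$ that pins the floor at $1$.
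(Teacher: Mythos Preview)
Your argument is correct and follows the same broad outline as the paper: identify $\chi_n$ with the coprimality indicator via the M\"obius identity, locate its support beyond $p_n$, and bound the harmonic tail by an integral comparison. Two execution differences are worth noting. First, you prove more than is needed by showing that \emph{every} $m\in[2,2p_n]$ with $\chi_n(m)=1$ is a prime in $(p_n,2p_n]$ (via $p_{n+1}^{2}>2p_n$); the paper simply observes that nothing in $(1,p_{n+1})$ can survive while $p_{n+1}$ does, which already yields the minimum without classifying all survivors. So the step you flag as the ``main obstacle'' is optional for the theorem as stated, though it is a clean piece of extra information. Second, your tail estimate is actually tighter than the paper's. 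By summing from $p_n+1$ rather than from $p_{n+1}+1$, you absorb the term $1/p_{n+1}$ into the same block and obtain $\sum_{m=p_n+1}^{2p_n}1/m<\ln 2<1$ in one stroke. The paper instead writes $S_n=1+1/p_{n+1}+R_n$ with $R_n<\ln 2$ and then claims $1/p_{n+1}+\ln 2<1$ for all $n\ge 1$; this last inequality fails at $n=1$ (since $1/3+\ln 2>1$), whereas the sharper bound $R_n<\ln(2p_n/p_{n+1})$ it records just before would have sufficed. Your formulation avoids this slip entirely.
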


\begin{proof}
The M\"obius sum $\sum_{d|k} \mu(d)$ is $1$ if $k=1$ and $0$ otherwise. Thus, $\chi_n(m) = 1$ if and only if $\gcd(m, P_n) = 1$. For $1 < m < p_{n+1}$, $m$ is necessarily divisible by at least one prime $p_k \le p_n$, implying $\chi_n(m) = 0$. By definition, $\gcd(p_{n+1}, P_n) = 1$, so $\chi_n(p_{n+1}) = 1$. This establishes $p_{n+1}$ as the minimum integer $m > 1$ satisfying the condition.

To prove the uniqueness certified by the floor function, consider the sum 
$$
S_n = \sum_{m=1}^{2p_n} \frac{\chi_n(m)}{m}.
$$
We decompose it as:
\begin{equation*}
S_n = \chi_n(1) + \frac{\chi_n(p_{n+1})}{p_{n+1}} + \sum_{m=p_{n+1}+1}^{2p_n} \frac{\chi_n(m)}{m} = 1 + \frac{1}{p_{n+1}} + R_n.
\end{equation*}
Using Bertrand's postulate ($p_{n+1} < 2p_n$) and the fact that $p_{n+1} \ge 3$, the remainder $R_n$ satisfies:
\begin{equation*}
0 \le R_n < \sum_{m=p_{n+1}+1}^{2p_n} \frac{1}{m} < \ln\left(\frac{2p_n}{p_{n+1}}\right) < \ln 2.
\end{equation*}
Since 
$$
\frac{1}{p_{n+1}} + \ln 2 < 1
$$ 
for all $n \ge 1$, we have $1 < S_n < 2$, which implies 
$$
\lfloor S_n \rfloor = 1.
$$
This floor value confirms that in the interval $[1, 2p_n]$, no other integer $m > 1$ besides $p_{n+1}$ can contribute significantly to the sum, thereby certifying $p_{n+1}$ as the unique survivor of the sieve.
\end{proof}

This identity provides an exact analytical implementation of the Eratosthenes sieve, where the harmonic sum acts as a nonlinear threshold detector: since the ``harmonic tail'' of the remaining terms in $S_n$ is strictly bounded, $\lfloor S_n \rfloor$ isolates the first non-trivial survivor of the sieve, which is $1$. The value $p_{n+1}$ is thus identified as the smallest $m > 1$ such that $\chi_n(m)=1$. The complexity of this formula is significantly lower than the $O(2^n)$ complexity of Gandhi's inclusion-exclusion sum. The number of iterations is $O(p_n)$, which is approximately $O(n \ln n)$. With $O(\log p_n)$ for each gcd calculation, the total complexity is $O(p_n \log p_n)$, a sub-exponential growth rate.

\section{Second formula: spectral perspective and harmonic resonance}\label{sec3}

In contrast with the first formula, which is an exact arithmetic identity, the present approach proposes a spectral model inspired by Riemann's explicit formula. Here, the location of primes is interpreted as the result of constructive and destructive interference between arithmetical oscillations.

The connection between the smooth asymptotic trend and the discrete locations of primes is governed by Riemann's explicit formula for the Chebyshev function $\psi(x) = \sum_{k \le x} \Lambda(k)$:
\begin{equation*}
\psi(x) = x - \sum_{\rho} \frac{x^\rho}{\rho} - \ln(2\pi) - \frac{1}{2}\ln(1-x^{-2}),
\end{equation*}
where $\rho = \beta + i\gamma$ denotes the non-trivial zeros of $\zeta(s)$. This formula shows that prime distribution is governed by a superposition of oscillatory modes indexed by the zeros of $\zeta(s)$, which naturally motivates a spectral reconstruction. In this model, we approximate the $n$-th prime by perturbing the high-order Cipolla expansion $T(n)$ with a sum of harmonics \cite{Cipolla1902}:
\begin{equation*}
p_n \approx \left\lfloor T(n) + \alpha \sum_{k=2}^{\lfloor \sqrt{T(n)} \rfloor} \Lambda(k) \cos\left(\frac{2\pi n}{\ln k}\right) + \mathcal{R}(n) \right\rfloor,
\end{equation*}
where $T(n)$ accounts for the global logarithmic drift:
\begin{equation}\label{trois}
T(n) = n \left[ \ln n + \ln \ln n - 1 + \frac{\ln \ln n - 2}{\ln n} - \frac{(\ln \ln n)^2 - 6 \ln \ln n + 11}{2 (\ln n)^2} \right].
\end{equation}
The scaling factor $\alpha$ calibrates the resonance amplitude. The summation is restricted to $k \le \sqrt{T(n)}$, so that only the dominant spectral contributions are retained while higher-frequency oscillations are neglected. This cutoff plays a role analogous to a bandwidth limitation in signal processing: only the main harmonics shaping the global structure are preserved, whereas fine-scale fluctuations are filtered out. No claim of convergence or exactness is made; the model should be understood as a phenomenological reconstruction inspired by the structure of Riemann's explicit formula. The choice of the von Mangoldt function $\Lambda(k)$ as the weighting factor is dictated by its role as the natural detector of prime power singularities, established through the logarithmic derivative of the Riemann zeta function:
\begin{equation*}
\frac{\zeta'(s)}{\zeta(s)} = -\sum_{k=1}^{\infty} \frac{\Lambda(k)}{k^s}, \quad \text{Re}(s) > 1.
\end{equation*}

In this framework, $\Lambda(k)$ acts as an amplitude gain, ensuring the spectral sum resonates only at arithmetically relevant coordinates. The term $\cos(2\pi n / \ln k)$ functions as a periodic filter; at $x \approx p_n$, the phases of the oscillations achieve constructive interference, producing a localized density peak that effectively ``pins'' the prime position. Conversely, between primes, the phases generate destructive interference, cancelling the probability of prime occurrence.

The residual term $\mathcal{R}(n)$ accounts for the contribution of the non-trivial zeros $\rho$ of the Riemann zeta function that lie beyond the spectral cutoff, as well as the higher-order terms of the prime power fluctuations. In this phenomenological model, $\mathcal{R}(n)$ is assumed to be an oscillatory noise of order $O(\sqrt{T(n)}\ln T(n))$ under the Riemann Hypothesis.

A significant feature of this model is its sensitivity to prime powers $k = p^m$. While primary resonances occur at primes ($m=1$), the sum accounts for secondary harmonics at $p^2, p^3, \dots$, contributing a ``higher-harmonic noise'' of order $\sqrt{x}$. The residual term $\mathcal{R}(n)$ effectively absorbs these secondary nodes, allowing the floor function to isolate the integer coordinate of $p_n$ rather than a nearby prime power singularity.

\section{Third formula: ``survival dynamics'' and information entropy}\label{sec4}

This third framework is not meant to provide an exact identity for $p_n$, but a probabilistic and information-theoretic interpretation of its growth, rooted in Mertens' theorem and sieve theory. In contrast to the spectral view, the survival framework treats the sequence of primes as a dynamic process governed by the depletion of available arithmetic states.

\subsection{Mertens' theorem as a constraint}\label{subsec41}

The probability that a large integer $x$ is not divisible by any prime $p \le p_n$ is traditionally modeled by the fundamental sieve product:
\begin{equation*}
\Phi(x, p_n) = \prod_{p \le p_n} \left( 1 - \frac{1}{p} \right).
\end{equation*}
According to Mertens' third theorem, as $n \to \infty$, this product converges as follows:
\begin{equation*}
\prod_{k=1}^n \left( 1 - \frac{1}{p_k} \right) \sim \frac{e^{-\gamma}}{\ln p_n},
\end{equation*}
where the term $e^{-\gamma} \approx 0.56146$ acts as a ``density efficiency'' constant. In this framework, we interpret $e^{-\gamma}$ not merely as a limit, but as the asymptotic survival rate of integers under the pressure of the Eratosthenes sieve.

\subsection{Information entropy and the prime sieve}\label{subsec42}

The emergence of $p_n$ can be modeled as an information-theoretic event. Let $X$ be a random variable representing the primality of an integer. The uncertainty (Shannon entropy) $H(X)$ regarding the exact position of $p_n$ increases as the density of primes decays. If the probability of an integer being prime is 
$$
\mathscr{P}(x) \approx 1/\ln x,
$$
the information content (surprisal) of finding a prime at $x$ is
$$
I(x) = -\log_2(\mathscr{P}(x)) = \log_2(\ln x).
$$
The global entropy associated with the distribution up to $n$ is:
\begin{equation*}
H_n = - \sum_{k=2}^{n} \mathscr{P}(k) \ln \mathscr{P}(k) \approx \int_{2}^{n} \frac{\ln(\ln x)}{\ln x} dx.
\end{equation*}
This entropy represents the ``search cost'' required to locate the next singularity in the sequence. By integrating this resistance with the Mertens limit, $p_n$ emerges as the coordinate where the accumulated ``information debt'' of the sieve is precisely compensated by the $e^{-\gamma}$ survival constant:
\begin{equation*}
p_n \approx \left\lfloor (n \ln n) \cdot \prod_{k=2}^{n} \left( 1 + \frac{1}{k \ln k - \ln \ln k} \right) \cdot e^{-\gamma} \right\rfloor.
\end{equation*}
This expression is obtained by combining the asymptotic trend $n \ln n$ with a recursive correction factor. The product term $\prod (1 + 1/(k \ln k - \ln \ln k))$ acts as a discrete growth multiplier that compensates for the logarithmic depletion of prime density. By normalizing this growth with the Mertens constant $e^{-\gamma}$, we align the probabilistic ``survival'' probability with the actual count of the sieve. Essentially, it represents the $n$-th prime as a cumulative product of survival events where the information cost of each new prime is factored into the preceding density. This identity serves as a bridge between the discrete sieve of Eratosthenes and the continuous limit of the prime distribution, where the product encapsulates the local ``noise'' of the density fluctuations.

\subsection{Selberg's sieve and dynamics optimization}\label{subsec43}

To refine the ``survival-dynamics'' model, it is necessary to move beyond the rigid inclusion-exclusion principle. While the M\"obius filter $\chi_n(m)$ used in our first formula is theoretically exact, it is analytically ``stiff'' due to its binary nature. Selberg's sieve \cite{Selberg1947,Hooley1976,Ribenboim2011} introduces a more flexible, quadratic approach to approximate the characteristic function of primes.

\subsubsection{The quadratic filtering, information entropy and the extremal principle}\label{subsubsec431}

In Selberg's framework, the survival probability is modeled by real weights $\lambda_d$, chosen to minimize the quadratic form:
\begin{equation*}
S(x, z) = \sum_{n \le x} \left( \sum_{d|n, d < z} \lambda_d \right)^2,
\end{equation*}
subject to the constraint $\lambda_1 = 1$. This optimization problem is equivalent to finding the path of least ``arithmetic resistance,'' acting as a ``dampened'' version of the M\"obius function.

From an information-theoretic perspective, Selberg's sieve represents the optimal compression of the sieving process. By choosing weights that minimize $S(x, z)$, the remainder term in the prediction of $p_n$ is significantly reduced. This suggests an extremal principle where the sequence $\{p_n\}$ emerges as the most efficient configuration to distribute non-divisibility across the set of integers. 

\subsubsection{The sieve capacity identity and structural density}\label{subsubsec433}

The introduction of the ``sieve capacity'', denoted here as $\mathcal{C}(p_n)$, quantifies the available density of integers that escape the initial sieving stages. In Selberg’s theory, this is related to the function 
$$
V(z) = \sum_{d < z} \frac{\mu^2(d)}{f(d)},
$$
which serves as a measure of the cumulative ``breathing space'' left by the $n$ preceding primes. In this survival dynamics model, $f(d)$ acts as a structural weighting function. It quantifies the ``arithmetic resistance'' of each divisor $d$, determining how much ``sieve capacity'' is consumed by the preceding primes. We define this effective capacity as:
\begin{equation*}
\mathcal{C}(p_n) \approx \frac{1}{V(z)},
\end{equation*}
representing the proportion of integers surviving the sieve. The parameter $z$ represents the sieve level, typically optimized as $z \approx \sqrt{p_n}$ to control the growth of the remainder term $\mathcal{R}_{\text{Selberg}}$. This leads to the identity:
\begin{equation*}
p_n \approx \left\lfloor \frac{n}{\mathcal{C}(p_n)} + \mathcal{R}_{\text{Selberg}}(n) \right\rfloor.
\end{equation*}
By treating the distribution as a continuous flow through a capacity-limited filter, we model $p_{n+1}$ as the point where the arithmetic space is no longer sufficient to maintain the required survival density dictated by the preceding set $P_n$.

The remainder $\mathcal{R}_{\text{Selberg}}(n)$ represents the accumulation of local errors in the approximation of the characteristic function of primes. It is primarily governed by the parity problem in sieve theory and the density of the ``admissible'' remaining intervals, typically behaving as a sub-leading correction to the main capacity-driven trend.

\subsubsection{Capacity depletion and the twin prime conjecture}\label{subsubsec434}

The concept of sieve capacity also provides a structural explanation for the convergence of Brun's constant $B_2$ \cite{Brun1919}:
\begin{equation*}
B_2 = \sum_{p, \, p+2 \in \mathbb{P}} \left( \frac{1}{p} + \frac{1}{p+2} \right) \approx 1.90216,
\end{equation*}
where $\mathbb{P}$ denotes the ensemble of prime numbers. In our framework, this convergence is interpreted as a ``saturation of local capacity''. For a pair $(p, p+2)$ to emerge, the sieve capacity must remain locally high enough to allow two adjacent survivors. As $n \to \infty$, the global capacity $\mathcal{C}(p_n)$ decays as $1/\ln p_n$. The finite value of $B_2$ suggests that the arithmetic volume occupied by twin primes is intrinsically bounded by this logarithmic depletion. This aligns with Selberg's upper bounds, suggesting that while twin primes may exist infinitely, they do so with a vanishingly small local density compared to the general prime population.

\section{Numerical discussion and implementation limits}\label{sec5}

While the three formulas provide a robust theoretical framework, their translation into computational algorithms reveals fundamental constraints related to machine precision and the nature of the approximations.

\subsection{The precision bottleneck and certification}\label{subsec51}

The discrete sieve identity, although mathematically exact, is highly sensitive to the numerical resolution of the harmonic sum. In a standard numerical environment (such as 64-bit floating-point arithmetic), the quantity $\delta_n = S_n - 1$ acts as a high-resolution probe. Since 
$$
\delta_n = 1/p_{n+1} + R_n,
$$
the presence of the harmonic tail $R_n$ represents a noise floor that prevents a direct inversion to recover $p_{n+1}$. As $n$ increases, the difference $\delta_n$ approaches the machine epsilon ($\epsilon \approx 2.22 \times 10^{-16}$), compromising the stability of the floor function $\lfloor S_n \rfloor = 1$ used to certify the sieve's integrity. This numerical limit necessitates the use of arbitrary-precision arithmetic to distinguish the prime signal from the surrounding harmonic noise.

\subsection{Spectral resolution and Gibbs phenomenon in the second formula}\label{subsec52}

The harmonic resonance model utilizes a truncated version of the von Mangoldt oscillations. In our implementation, the summation over $k$ is limited to $\sqrt{T(n)}$ to maintain computational efficiency. For relatively small $n$, this acts as a ``low-pass filter'' that is insufficient to capture the sharp singularities of the prime distribution. The observed residuals represent an arithmetic version of the Gibbs phenomenon: without the inclusion of a larger set of non-trivial zeros of the Riemann zeta function, the spectral reconstruction provides a smooth density approximation rather than a discrete identification of $p_n$.

\subsection{Asymptotic drift of survival dynamics in the third formula}\label{subsec53}

The survival dynamics approach exhibits a systematic underestimation in the pre-asymptotic regime ($n < 100$). This is consistent with the behavior of the Mertens constant $e^{-\gamma}$, which governs the distribution at the limit $n \to \infty$. For finite $n$, the cumulative growth index does not yet fully compensate for the local density fluctuations. This drift highlights the transition between the deterministic sieve and the probabilistic baseline, where the ``information debt'' of the prime sequence is only balanced over very large intervals.

\section{Conclusion}\label{sec6}

The comparative analysis of the three models presented in this study reveals that the $n$-th prime number is not merely an isolated arithmetic value, but the result of a multi-scale interaction. By bridging discrete sieving, spectral analysis, and information theory, we can move beyond the ``random'' appearance of primes toward a structured understanding of their emergence.

The first model demonstrates that $p_{n+1}$ is exactly recoverable through a discrete gating mechanism, provided the harmonic sum is correctly filtered. The second model shifts this perspective to the frequency domain, where primes appear as points of constructive interference, effectively ``pinning'' the integer coordinates through wave synchronization. Finally, the third model provides the global growth envelope, where the search for the next prime is governed by the exhaustion of arithmetic capacity and the accumulation of information entropy.

We can consider that $p_n$ is the simultaneous product of these two complementary regimes: a global growth law dictated by cumulative information debt, and a local pinning effect determined by spectral resonance. This duality suggests that the distribution of primes is a highly correlated system where the emergence of a new singularity $p_{n+1}$ is strictly conditioned by the information density of the preceding set $P_n$.

From a broader mathematical standpoint, these frameworks transition the study of prime numbers from static identities to the language of complex systems and thermodynamics. Modeling primes as a ``Coulomb gas'' or a capacity-limited flow offers a path toward reconciling the rigid determinism of the sieve of Eratosthenes with the probabilistic beauty of the Riemann zeta function. Future work exploring the coupling between spectral noise and entropy-driven baselines may offer deeper insights into the transition from asymptotic trends to the discrete arithmetic reality of the primes.

\section*{Appendix A: probabilistic proof of Gandhi's formula}

Let $(p_n)_{n \in \mathbb{N}^*}$ be the sequence of prime numbers. In 1971, J.M. Gandhi proved that for every $n \in \mathbb{N}^*$, the prime number $p_{n+1}$ is the unique integer $m$ such that:
\begin{equation}\label{eq:gandhi_bounds}
1 < 2^m \left( \frac{1}{2} + \sum_{k=1}^n (-1)^k \sum_{1 \le i_1 < \dots < i_k \le n} \frac{1}{2^{p_{i_1} \dots p_{i_k}} - 1} \right) < 2.
\end{equation}
The objective of this appendix is to establish this property using probabilistic reasoning, following Ref. \cite{Meyre}. We consider the random drawing of a non-zero integer according to the geometric distribution with parameter $1/2$. We work in the probability space $(\Omega, \mathcal{F}, P)$, with $\Omega = \mathbb{N}^*$, $\mathcal{F} = \mathcal{P}(\mathbb{N}^*)$, and $\mathscr{P}$ is the geometric law of parameter 1/2, this latter distribution being associated with the mass function $\mathfrak{p}: \mathbb{N}^* \to \mathbb{R}_+$ defined by:
$$
\forall k \in \mathbb{N}^*, \mathfrak{p}_k = \frac{1}{2^k}.
$$

Let $A_d$ be the event ``the result is divisible by $d$'', such that $A_d = d\mathbb{N}^*$ and
$$
\mathscr{P}(A_d) = \sum_{k=1}^\infty \frac{1}{2^{kd}}.
$$

By using the sum of a geometric series (here, with common ratio $1/2^d$), we deduce that the sought probability is:
\begin{equation}\label{dix}
\mathscr{P}(A_d) = \frac{1}{2^d - 1}.
\end{equation}
Let $B_n$ be the event ``the result is coprime with $p_1 \times \dots \times p_n$'', so that $\pi_n = \mathscr{P}(B_n)$. Since $p_1, \dots, p_n$ are prime numbers, $\omega \in B_n$ if $\omega$ is not divisible by any of the numbers $p_1, \dots, p_n$. In other words,
$$
B_n = \bigcap_{i=1}^n A_{p_i}^c
$$
or, by passing to the complement,
$$
B_n^c = \bigcup_{i=1}^n A_{p_i}.
$$
By using Poincar\'e's formula (inclusion-exclusion principle), we deduce, by passing to the probability $\mathscr{P}$:
$$
1 - \pi_n = \sum_{k=1}^n (-1)^{k-1} \sum_{1 \le i_1 < \dots < i_k \le n} \mathscr{P}(A_{p_{i_1}} \cap \dots \cap A_{p_{i_k}}).
$$
Since $p_{i_1}, \dots, p_{i_k}$ are always prime numbers, we have the following equality for all $k \in \{1, \dots, n\}$ and all $1 \le i_1 < \dots < i_k \le n$:
$$
A_{p_{i_1}} \cap \dots \dots \cap A_{p_{i_k}} = A_{p_{i_1} \dots p_{i_k}}.
$$
We deduce, using equality (\ref{dix}):
$$
1 - \pi_n = \sum_{k=1}^n (-1)^{k-1} \sum_{1 \le i_1 < \dots < i_k \le n} \frac{1}{2^{p_{i_1} \dots p_{i_k}} - 1},
$$
which immediately gives the sought equality:
\begin{equation}\label{dix-huit}
\pi_n = 1 + \sum_{k=1}^n (-1)^k \sum_{1 \le i_1 < \dots < i_k \le n} \frac{1}{2^{p_{i_1} \dots p_{i_k}} - 1}.
\end{equation}
The smallest integer strictly greater than 1 that is coprime with the product $p_1 \times \dots \times p_n$ is none other than $p_{n+1}$, as a moment's reflection shows. Furthermore, since $p_{n+1}$ is odd, it is not an element of $B_n$. We deduce that $B_n = \{1, p_{n+1}\} \cup C_n$, with $\min C_n \ge p_{n+1} + 2$. By passing to the probability $\mathscr{P}$, we have:
\begin{equation}\label{eq:pi_n}
\pi_n = \frac{1}{2} + \frac{1}{2^{p_{n+1}}} + \mathscr{P}(C_n),
\end{equation}
with the inequalities:
$$
0 < \mathscr{P}(C_n) < \sum_{k=p_{n+1}+2}^\infty \frac{1}{2^k} = \frac{1}{2^{p_{n+1}+1}},
$$
which indeed leads to the sought formula (\ref{eq:pi_n}). \\
By comparing equality (\ref{dix-huit}) and formula (\ref{eq:pi_n}), we obtain:
$$
2^{p_{n+1}} \left( \frac{1}{2} + \sum_{k=1}^n (-1)^k \sum_{1 \le i_1 < \dots < i_k \le n} \frac{1}{2^{p_{i_1} \dots p_{i_k}} - 1} \right) - 1 = \theta_n
$$
which allows us to conclude, since $0 < \theta_n < 1/2 < 1$.

By taking the base-2 logarithm, denoted $\log_2$, in the inequality range (\ref{eq:gandhi_bounds}), we obtain:
$$
0 < m + \log_2 \left( \frac{1}{2} + \sum_{k=1}^n (-1)^k \sum_{1 \le i_1 < \dots < i_k \le n} \frac{1}{2^{p_{i_1} \dots p_{i_k}} - 1} \right) < 1.
$$
We easily deduce the following bounds:
$$
m < 1 - \log_2 \left( \frac{1}{2} + \sum_{k=1}^n (-1)^k \sum_{1 \le i_1 < \dots < i_k \le n} \frac{1}{2^{p_{i_1} \dots p_{i_k}} - 1} \right) < m + 1
$$
which implies the equality:
$$
m = \left\lfloor 1 - \log_2 \left( \frac{1}{2} + \sum_{k=1}^n (-1)^k \sum_{1 \le i_1 < \dots < i_k \le n} \frac{1}{2^{p_{i_1} \dots p_{i_k}} - 1} \right) \right\rfloor.
$$
We have thus proven both the uniqueness of $m$ and, according to the previous question, established the equality:
$$
\forall n \in \mathbb{N}^*, \quad p_{n+1} = \left\lfloor 1 - \log_2 \left( \frac{1}{2} + \sum_{k=1}^n (-1)^k \sum_{1 \le i_1 < \dots < i_k \le n} \frac{1}{2^{p_{i_1} \dots p_{i_k}} - 1} \right) \right\rfloor.
$$

\end{document}